\documentclass{amsart}
\usepackage[T1]{fontenc}
\usepackage{url}
\newtheorem{theorem}[subsection]{Theorem}

\title{Existence and maximal corank of simple $Z_p$-invariant germs}
\author{Ivan Proskurnin}
\date{}

\begin{document}
\maketitle

\begin{abstract}
In this paper we improve the previously achieved upper bound on the corank of an equivariantly stable singularity for a group of prime order.
We also prove that the maximal corank of a simple $\mathbb{Z}_p$-invariant germ tends to infinity as $p$ increases and is asymptotically logarithmic, so the previously obtained bound is valid up to order of magnitude.
\end{abstract}

\section{Introduction}

The present paper is a continuation of our former work. In a previous paper (\cite{1}), a following theorem has been proven:
\medskip

\begin{theorem} Let  $\tau$ be a linear action of $\mathbb{Z}_p$ on $(\mathbb{C}^n,0)$, $rk(\tau)$ the maximal rank of  a $\tau$-invariant quadratic form, $p$ -- a prime number. Germs equivariantly simple with respect to $\tau$ may only exist in one of the two cases:

1)$det(\tau) \neq 1, n - rk(\tau) \leq log_2 (p+1)$;

2)$det(\tau) =1, n - rk(\tau) \leq log_2 (2p-1)$.

\end{theorem}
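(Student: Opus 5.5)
By the equivariant splitting lemma, a $\tau$-invariant germ with nondegenerate invariant quadratic part of rank $rk(\tau)$ is equivalent to $Q(y)+g(x)$, where $Q$ is a nondegenerate quadratic form and $x$ runs over a $\tau$-invariant subspace $V$ of dimension $k=n-rk(\tau)$ that carries no nonzero invariant quadratic form. Equivariant simplicity of the original germ is equivalent to equivariant simplicity of $g$ on $V$. I would therefore pass to $V$ at once and assume there is no invariant quadratic form.

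Diagonalize $\tau$ on $V$ with weights $a_1,\dots,a_k\in\mathbb{Z}_p$, so that $\tau$ acts by $x_i\mapsto\varepsilon^{a_i}x_i$. The absence of invariant quadratic forms means $a_i+a_j\not\equiv 0 \pmod p$ for all $i,j$, including $i=j$; hence $a_i\neq 0$. A monomial $x^m$ is invariant exactly when $\sum m_ia_i\equiv 0 \pmod p$.

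The strategy is a modality count in the style of Arnold: show that the invariant $3$-jets, or the jets of some low degree $d$, form a family whose dimension exceeds the dimension of the orbit of the equivariant group acting on that jet space. The relevant group is the centralizer of $\tau$ in $GL(V)$, namely $\prod GL(V_a)$ over the weight spaces $V_a$, together with higher-order equivariant changes of coordinates. Higher-order changes do not act on the space of homogeneous degree-$d$ parts of the lowest nonzero invariant jet. Thus, if $d$ is the lowest degree in which invariant monomials occur, a count of invariant degree-$d$ monomials larger than $\sum_a (\dim V_a)^2$ produces moduli and rules out simplicity.

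The main step, and the expected obstacle, is a combinatorial lemma on subset sums. Consider the $2^k$ subset sums $\sum_{i\in S}a_i$, or more precisely the monomials $\prod_{i\in S}x_i$. If $2^k>p$, pigeonhole gives two distinct subsets $S,T$ with equal sums, so the monomial $\prod_{S\setminus T}x_i\prod_{T\setminus S}x_i^{p-1}$ is invariant. This suggests arguing that many low-degree invariants exist and that they cannot all be absorbed by the group action. To obtain the sharp constants $\log_2(p+1)$ and $\log_2(2p-1)$, I would instead use signed subset sums, i.e. vectors $\epsilon\in\{-1,0,1\}^k$, or consider the $2^k$ sums $\sum_{i\in S} a_i$ in $\mathbb{Z}_p$ directly. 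If $2^k\ge p+2$, collisions force a linear relation $\sum c_ia_i\equiv 0$ with small coefficients. From such a relation, together with the forced non-invariance of quadrics, one builds a one-parameter family of invariant germs that are pairwise inequivalent: for example the family $x^{u}y^{v}+\lambda\cdot(\text{another invariant monomial of the same weighted degree})$, where the weighted-homogeneous structure makes $\lambda$ a modulus.

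In the case $\det\tau=1$ there is an extra constraint $\sum a_i\equiv 0$. This makes the monomial $x_1\cdots x_k$ invariant and changes which collisions are forced, which accounts for the bound $2p-1$ in place of $p+1$.

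The hardest part will be proving that the family found really has a modulus. It must not be trivialized by equivariant coordinate changes, and it must not contain a lower-order simple germ adjacent to it. I would handle this by choosing the relation of minimal total degree and checking the count of weighted-homogeneous monomials against the dimension of the quasi-homogeneous diagonal torus action.
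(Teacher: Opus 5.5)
There is a genuine gap. The step that is supposed to produce a modulus is never carried out, and the mechanism you propose for it does not work in general.

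Your criterion compares the number of invariant monomials of the lowest occurring degree $d$ with $\sum_a(\dim V_a)^2$. For generic weights there are far too few such monomials. Take $p=13$ and weights $(1,2,4,8)$ on $\mathbb{C}^4$. No $a_i+a_j$ vanishes mod $13$, so $rk(\tau)=0$. Also $\det\tau=\varepsilon^{15}\neq 1$ and $4>\log_2 14$, so the statement asserts that no simple germ exists. Yet the only invariant cubic monomial is $x_1x_3x_4$, while the centralizer is the $4$-dimensional diagonal torus, so your count at $d=3$ gives nothing. Beyond the lowest degree the higher-order equivariant coordinate changes act, and your weighted-homogeneous fallback is left unverified.

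Nor does a subset-sum collision by itself force a modulus. For $p=3$ and weights $(1,1)$ the subsets $\{1\}$ and $\{2\}$ collide, yet $x^3+y^3$ has $\nu=1$, so it is stable, with $\mu=4=p+1$. Nothing in your sketch explains why moduli should appear exactly when $2^k\geq p+2$, or when $2^k\geq 2p$ for $\det\tau=1$. The invariance of $x_1\cdots x_k$ is not what governs the second case.

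The missing idea is to replace the search for moduli in families by a statement about a single germ.

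If a simple germ exists, finitely many orbits cover a neighbourhood of its jet, so one orbit is open and a stable germ exists (Theorem 3.2). Stability is equivalent to $\nu(f)=1$ (Theorem 3.1). Splitting off the quadratic part leaves $h$ on $V$ with $j^2h=0$ and $\nu(h)=1$.

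The number $2^k$ then enters only as the lower bound $\mu(h)\geq 2^k$, because every $\partial h/\partial x_i$ lies in $\mathfrak{m}^2$. It has nothing to do with subsets of weights.

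The upper bound comes from the $\mathbb{Z}_p$-module structure of $Q_h$. When $\nu(h)=1$, Theorem 3.3 says $\mu_{\mathbb{Z}_p}(h)$ is $\tilde{\mathbb{C}}$ or $\tilde{\mathbb{C}}\oplus 2W$ if $\det\tau=1$, and $\det(\tau)\otimes W$ or $2\det(\tau)\oplus(\det(\tau)\otimes W)$ if $\det\tau\neq 1$. These have dimensions $1$, $2p-1$, $p-1$, $p+1$ respectively. This is also where the dependence on $\det\tau$ really enters. Comparing with $2^k\leq\mu(h)$ gives precisely the bounds $\log_2(p+1)$ and $\log_2(2p-1)$.

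Your proposal contains neither the reduction to $\nu=1$ nor any upper bound on $\mu$, so as it stands it does not prove the statement.
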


The statement of theorem 1.1. is not very encouraging, as one may hope that there is a reasonably tame classification of all $\mathbb{Z}_p$-invariant simple singularities. For such classification to exist, there should at least exist some constant upper bound for the coranks of simple germs. Here we will prove that this is in fact not the case and the estimate $n - rk(\tau) = O(log_2(p))$ is valid up to order of magnitude. We will also narrow down the possible range of group actions for which simple singularities may exist. Namely, we are going to prove the following theorems:

\begin{theorem} Let  $\tau$ be a linear action of $\mathbb{Z}_p$ on $(\mathbb{C}^n,0)$, $rk(\tau)$ the maximal rank of  a $\tau$-invariant quadratic form, $p$ -- a prime number. Germs equivariantly simple with respect to $\tau$ may only exist if $det(\tau) \neq 1, n - rk(\tau) \leq log_2 (p+1)$ or if $\tau$ is a real group action.

\end{theorem}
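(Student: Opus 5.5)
The plan is to reduce Theorem 1.2 to one new case. By Theorem 1.1, if simple germs exist then either $det(\tau)\neq 1$ and $n-rk(\tau)\le log_2(p+1)$, or $det(\tau)=1$. So it suffices to show the following: \emph{if $det(\tau)=1$ and $\tau$ is not a real action, then no $\tau$-equivariantly simple germ exists.}

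First I fix the set-up. Diagonalize $\tau$ and let the generator act by $x_i\mapsto \varepsilon^{a_i}x_i$, with $\varepsilon=e^{2\pi i/p}$. An invariant quadratic form can only involve monomials $x_ix_j$ with $a_i+a_j\equiv 0 \pmod p$. Hence $rk(\tau)$ equals
\begin{itemize}
\item the multiplicity of the weight $0$, plus
\item $2\min(m_a,m_{-a})$ summed over the pairs $\{a,-a\}$ with $a\neq 0$,
\end{itemize}
where $m_a$ is the multiplicity of the weight $a$. The action is real precisely when $m_a=m_{-a}$ for all $a$, i.e. when $rk(\tau)=n$.

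Next I apply the equivariant splitting lemma used in the proof of Theorem 1.1. It shows that simplicity of a germ is equivalent to simplicity of a residual germ $g(y_1,\dots,y_k)$ in $m^3$, where $k=n-rk(\tau)$. The residual weights $b_1,\dots,b_k$ are nonzero, and no two of them are opposite. Since the paired weights cancel, the hypothesis $det(\tau)=1$ gives $\sum b_j\equiv 0 \pmod p$. Non-reality gives $k\ge 1$. In fact $k\ge 3$: one or two nonzero weights with no opposite pair cannot sum to $0$ mod $p$, except when $k=2$ and $b_1=b_2$, which forces $p=2$ and is real.

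The core step is a modality count on the residual germ, aiming at a contradiction. The group acting on jets of $g$ is the group of equivariant diffeomorphisms. Its linear part is the centralizer of $\tau$ restricted to the $y$-space, of dimension $\sum_b \mu_b^2$, where $\mu_b$ is the multiplicity of $b$ among the $b_j$. I would choose a degree $d$ at which the invariant monomials are plentiful. Two sources of invariants are guaranteed:
\begin{itemize}
\item the product $y_1\cdots y_k$, which is invariant because $\sum b_j\equiv 0$;
\item all monomials $y^\alpha$ with $\sum\alpha_jb_j\equiv 0$.
\end{itemize}
I would then show that the space of invariant $d$-jets, modulo those reachable by the orbit of the equivariant group, has positive dimension at every candidate normal form. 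This means showing that the quasihomogeneous stratum has a modulus, via the standard inequality between $\dim$ of the invariant jet space and $\dim$ of the tangent space to the orbit.

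Concretely, I would first treat the case where all $b_j$ are equal to some $b$. Then $kb\equiv 0$, so $k\ge p$. Here $g$ is an invariant of degree divisible by $p$ in $k\ge p$ variables with the full $GL_k$ acting, and moduli appear from the classical count for forms of degree $\ge 3$ in $\ge 3$ variables. For general weights, I would pick a weight class $b$ of maximal multiplicity and restrict to a suitable coordinate subspace, reducing to a small number of weight classes whose relation $\sum b_j\equiv 0$ produces an invariant monomial of low degree.

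The main obstacle is making this count uniform. Invariant monomials of low degree may be scarce when the $b_j$ are spread out, and the invariant $y_1\cdots y_k$ has high degree. To get around this, I would try to exploit the relation $\sum b_j\equiv0$ together with the Cauchy--Davenport/Erd\H{o}s--Ginzburg--Ziv type subset-sum results in $\mathbb{Z}_p$. These should guarantee many zero-sum submultisets, and hence enough invariant monomials of a fixed degree, to exceed the orbit dimension; this is also exactly where the $log_2$ bounds of Theorem 1.1 come from.
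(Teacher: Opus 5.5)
Your reduction to the case $\det(\tau)=1$ with $\tau$ not real is correct, and so is your bookkeeping of the residual weights. The core step, however, is never carried out. ``I would show that \dots\ has positive dimension'' and ``I would try to exploit Cauchy--Davenport/Erd\H{o}s--Ginzburg--Ziv'' describe a programme, not an argument, and you yourself leave the uniformity problem open.

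It is also doubtful that the programme can work. Comparing the dimension of invariant $d$-jets with the dimension of the linear centralizer only controls germs whose $(d-1)$-jet vanishes. A candidate simple germ may have nonzero low-order invariant terms, and then the nonlinear part of the diffeomorphism group also acts on its $d$-jets. For example, take residual weights $(1,2,4)$ mod $7$: then $\sum b_j\equiv 0$ and there is no opposite pair. The only invariant cubic is $y_1y_2y_3$, the centralizer is a $3$-dimensional torus, and a germ such as $y_1y_2y_3+y_1^7+y_2^7+y_3^7$ is simply not controlled by such a count. Restricting to coordinate subspaces destroys the relation $\sum b_j\equiv 0$, and you give no mechanism for transferring non-simplicity back to the full germ.

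One factual point: the $\log_2$ bounds in Theorem 1.1 do not come from subset-sum combinatorics. They come from $\mu(h)\ge 2^{n-rk(\tau)}$ for a residual germ with zero $2$-jet, compared with $\dim \mu_{\mathbb{Z}_p}(h)\le p+1$ or $2p-1$ from Theorem 3.3.

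The missing idea is that you never need to prove non-simplicity directly. The argument runs as follows.
\begin{enumerate}
\item By Theorem 3.2 it suffices to rule out \emph{stable} germs.
\item By Theorem 3.1, a stable $f$ has $\nu(f)=1$, so the invariant part of $Q_f$ is $\mathbb{C}\cdot 1$.
\item If $\det(\tau)=1$, the Hessian determinant $H_f$ of an invariant $f$ is invariant, since $H_f\circ A=\det(A)^{-2}H_f$ for $A=\tau(g)$.
\item The class of $H_f$ in $Q_f$ is nonzero, because it spans the socle.
\item If $\mu(f)>1$, the socle lies in the maximal ideal of $Q_f$. Then $H_f$ would be a second, linearly independent invariant, contradicting $\nu(f)=1$.
\item Hence $f$ is Morse, its $2$-jet is a nondegenerate $\tau$-invariant quadratic form, so $rk(\tau)=n$ and $\tau$ is real.
\end{enumerate}
This short Hessian argument is Theorem 4.1 of the paper, which eliminates case 4 of Theorem 3.3. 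It replaces your entire modality count, of which you have only handled the special subcase where all residual weights are equal.
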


\begin{theorem} 

For each $N \: \in \; \mathbb{N}$ there is a prime number $p$ and a $\mathbb{Z}_p$-invariant simple (with respect to a nontrivial $\mathbb{Z}_p$-action) germ $f: (\mathbb{C}^n, 0) \longrightarrow  (\mathbb{C}, 0)$ with $j^2(f)=0$ and $n>N$. In fact, for every $\varepsilon>0$ for allmost all prime $p$ there is a $\mathbb{Z}_p$-invariant simple germ $f: (\mathbb{C}^n, 0) \longrightarrow  (\mathbb{C}, 0)$ with $n>(1-\varepsilon)(log(log(p)))$ and $j^2(f)=0$.

\end{theorem}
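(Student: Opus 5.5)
We will construct, for suitable primes $p$, an explicit diagonal action $\tau$ of $\mathbb{Z}_p$ on $\mathbb{C}^n$, say $\tau(x_1,\dots,x_n)=(\zeta^{a_1}x_1,\dots,\zeta^{a_n}x_n)$ with $\zeta=e^{2\pi i/p}$. The weights will be chosen so that no invariant monomial of degree $2$ exists (hence $j^2(f)=0$ for every invariant germ), while there is a cubic invariant germ $f$ whose equivariant miniversal deformation contains only finitely many equivalence classes nearby. The guiding idea is to make the invariant ring as sparse as possible in low degrees. If the only invariant monomials of degree $\le 3$ are the cubes $x_i^3$, we need $3a_i\equiv 0$ for each $i$; but $p$ is a prime $>3$, so this forces $a_i=0$. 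We therefore choose instead weights for which the invariant cubic monomials form a ``chain'' or ``cycle'' structure. For example we take $a_{i+1}\equiv -2a_i \pmod p$, so that $x_i^2x_{i+1}$ is invariant. Then $a_i=(-2)^{i-1}a_1$, and closing the cycle requires $(-2)^n\equiv 1\pmod p$. This leads to the germ
\[
f=x_1^2x_2+x_2^2x_3+\dots+x_{n-1}^2x_n+x_n^2x_1 .
\]

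The key steps, in order, are as follows. (1) Choose $p$ so that the order of $-2$ modulo $p$ equals $n$; equivalently, $p$ divides $(-2)^n-1$ but no smaller such number. (2) Verify that no other monomials of degree $2$ or $3$ are invariant. Such a monomial gives a relation $\sum c_i(-2)^{i-1}\equiv 0 \pmod p$ with $\sum c_i\le 3$, and the left side is an integer of absolute value less than $3\cdot 2^{n}$. It can be divisible by $p$ only in a few explicitly checkable coincidences, provided $p$ is large compared to $2^n$ up to a constant. Because $p$ only needs to divide $(-2)^n-1$, we need primitive prime divisors that are large, and we will choose $n$ so that $p\gtrsim 2^{n}/\mathrm{const}$. (3) Show that $f$ is equivariantly simple. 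Compute the equivariant Jacobian/tangent space $T f=\langle \partial f/\partial x_i\rangle$, intersected with invariants and the $\tau$-equivariant vector fields. Show the quotient of the invariant ring by it is finite dimensional, spanned by monomials of degree $\le 3$ together with finitely many higher ones of weight $0$. Then show that every invariant deformation $f+\varepsilon g$ falls into finitely many orbits. This is done by a weighted-homogeneous argument: all invariant monomials other than the $x_i^2x_{i+1}$ have high degree, namely at least about $\log p$, and so lie in $\mathfrak m\cdot Tf$ by finite determinacy.

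For the asymptotic statement, we must find, for almost all primes $p$, some $n>(1-\varepsilon)\log\log p$ with a suitable cyclic weight structure. Here we relax the chain construction: we use weights $a_i$ chosen from a set of residues avoiding all small additive relations of length $\le 3$. Such sets exist by a pigeonhole/greedy argument as long as $n$ is about $\log\log p$; this is far weaker than the $\log p$ available from a Sidon-type construction, and that slack absorbs the simplicity conditions. We then take $f$ to be a generic invariant cubic built from the (few) permitted invariant cubic monomials, arranged so the singularity is isolated.

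The main obstacle is step (3): proving genuine equivariant simplicity rather than just finite determinacy. We must exclude moduli coming from invariant monomials of degree $\ge 4$. The first approach to try is to show that every invariant monomial of degree $4$ up to the determinacy degree lies in the equivariant tangent space $\mathfrak m\, Tf$. Since $\deg$ of nonquadratic invariants grows with $p$, only the cubic part matters, and we can then appeal to the finiteness of orbits of cubic forms with the prescribed monomial support under the diagonal torus.
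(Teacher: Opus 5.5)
\textbf{There is a genuine gap: the concrete construction cannot work.} The failure already shows up in $\nu(f)$.

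\textbf{The socle monomial is invariant.} Your condition $p\mid(-2)^n-1$ forces $\det\tau=1$. Indeed $\sum_i a_i=\sum_{i=1}^n(-2)^{i-1}=\frac{1-(-2)^n}{3}$, and $p\neq 3$ because $-2\equiv 1 \pmod 3$ has order $1$. Hence $\sum_i a_i\equiv 0 \pmod p$. The Hessian of $f$ has weight $-2\sum_i a_i$, so it is invariant. It is nonzero in $Q_f$ and vanishes at $0$, hence $\nu(f)\ge 2$ and $f$ is not equivariantly stable. In monomial terms, the socle monomial $x_1x_2\cdots x_n$ of the basis $\{x^r: r_i\in\{0,1\}\}$ of $Q_f$ is invariant. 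For example, with $n=5$, $p=11$ its weight is $1-2+4-8+16=11$.

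\textbf{Step (3) is false as stated.} This invariant monomial has degree $n>3$ and is not in $J_f$, let alone in $\mathfrak m\,Tf$. Finite determinacy only disposes of degrees above $n$, where the socle of $Q_f$ sits, while $\log_2 p<n$.

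\textbf{This is a counting obstruction, not an accident.} The weights $\sum_i r_i(-2)^{i-1}$ of the $2^n$ basis monomials form $2^n$ consecutive integers containing $0$. Meanwhile $3\mid(-2)^n-1$ gives $p\le(2^n\pm1)/3$. So $p$ itself is the weight of some nonzero $r$, and your hope in step (2) that $p$ is ``large compared to $2^n$ up to a constant'' cannot rescue this.

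\textbf{Odd $n$: no simple germ exists for this action.} Here $\langle -2\rangle\subset\mathbb{F}_p^*$ has odd order and does not contain $-1$. So there is no invariant quadratic form and $\tau$ is not real. Together with $\det\tau=1$, Theorem 1.2 says there is no equivariantly simple germ for this action at all.

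\textbf{Even $n$: step (2) fails.} Since $(-2)^{n/2}\equiv-1$, every $x_ix_{i+n/2}$ is invariant, contradicting step (2). The products $\prod_{i\in S}x_ix_{i+n/2}$ also give further invariant basis monomials of degree $\ge 4>\deg f$.

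\textbf{The missing idea is to make the acting group larger than the Milnor number.} That needs general exponents rather than all $d_i=2$. It also needs $p$ to \emph{equal}, not merely divide, the order of the cyclic group acting on the loop polynomial. The paper takes $f=x_1^{d_1}x_2+\dots+x_n^{d_n}x_1$ with weights $1,-d_1,d_1d_2,\dots$ and $p=d_1\cdots d_n+1$ prime, with $n$ odd.
\begin{itemize}
\item The weights of the basis monomials $x^r$, $0\le r_i<d_i$, are alternating mixed-radix numbers of absolute value at most $d_1\cdots d_n-1<p$.
\item Peeling off the top term inductively shows only $r=0$ has weight $\equiv 0$.
\item So $\nu(f)=1$, $f$ is equivariantly stable by Theorem 3.1 and hence simple. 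No analysis of moduli like your step (3) is needed.
\item The corank is the number of factors in $p-1=d_1\cdots d_n$. Erd\H{o}s's theorem on the number of prime divisors of $p-1$ then gives $n>(1-\varepsilon)\log\log p$ for almost all $p$.
\end{itemize}

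\textbf{Your asymptotic sketch is not a construction.} If the weights avoid all relations of length $\le 3$, there is no invariant cubic at all. Moreover, an invariant $f$ with $j^2f=0$ and isolated critical point needs, for each $i$, a monomial $x_i^k$ or $x_i^kx_j$ in $f$; otherwise the $x_i$-axis is critical. That forces exactly the weight relations $ka_i\equiv 0$ or $d_ia_i+a_j\equiv0$ that the loop construction organizes, and the arithmetic of $p-1$ is unavoidable.
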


\begin{theorem}There are constants $\alpha, \beta$ and an infinite sequence of distinct prime numbers $p_1, \ldots p_n$ such that there is a simple $\mathbb{Z}_{p_i}$-invariant germ with corank at least $\alpha log_2 (p_i - 1) + \beta$.

\end{theorem}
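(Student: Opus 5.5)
We need an infinite family of primes $p$ with a simple $\mathbb{Z}_p$-invariant germ of corank about $\log_2 p$. The plan is to use Fermat-type or Mersenne-like structure: choose primes $p$ for which $2$ has small multiplicative order modulo $p$, or more flexibly primes dividing $2^k - 1$ or $2^k+1$ with $p$ not much smaller than $2^k$. We let $\mathbb{Z}_p$ act diagonally on $\mathbb{C}^n$ with weights $1, 2, 4, \ldots, 2^{n-1} \pmod p$. The monomials $x_i^2 x_{i+1}^{-1}$ are formally invariant, and this suggests the invariant polynomial
$$f = x_1^2 x_2' + \ldots,$$
or, better, a chain germ built from the weights. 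Concretely, the weights $w_i = 2^{i-1}$ give invariant cubic-type monomials $x_i^2 y_{i}$, where $y_i$ has weight $-2^{i}$. So we work with pairs of coordinates with weights $(2^{i-1}, -2^{i})$. We want all invariant quadratic forms to vanish, so that $j^2 f = 0$ and the corank equals $n$. This requires that no two weights sum to $0 \bmod p$ and no weight is $0$. The whole design is therefore: choose the weight set $W \subset \mathbb{Z}_p^*$ with $W \cap (-W) = \emptyset$, $|W| = n \approx \alpha\log_2 p$, such that the invariant ring restricted to low degree is rigid enough for simplicity.

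First, we write down the candidate germ $f$ as a sum of invariant monomials forming a ``chain'' or ``loop'' (e.g. $x_1^2x_2 + x_2^2x_3 + \dots + x_n^2 x_1$ with weights $w_{i+1} \equiv -2w_i$). This forces $(-2)^n \equiv 1 \pmod p$, i.e. $p \mid 2^n - (-1)^n$. Here $n$ is the order of $-2$, and we need $n \gtrsim \alpha \log_2 p$. This holds whenever $p$ is a large prime factor of $2^n\pm1$, for instance a prime $p \mid 2^n \pm 1$ with $p > 2^{cn}$. Infinitely many such (distinct) primes exist: in the easy case Mersenne/Fermat primes would suffice but are not known to be infinite, so instead we use the elementary fact that $\Phi_n(2)$ has size about $2^{\varphi(n)}$ and its prime factors other than possibly one small one are $\equiv 1 \pmod n$. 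We take $n$ prime, so that $\Phi_n(2) = (2^n-1)$ has all prime factors $\ge 2n+1$. A largest prime factor need not be large, though; to get $p \le 2^{n/\alpha}$ one needs $n \ge \alpha\log_2 p$. This holds automatically since $p \le 2^n$, which gives $n \ge \log_2 p$. Thus any prime $p \mid 2^n-1$ with $n$ the order of $2$ mod $p$ works, with $\alpha = 1$, and distinct primes arise from distinct prime $n$ by Zsigmondy. This is the key observation: small order of $2$ is automatic in the right direction.

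Second, we verify that the weights $\pm 2^i$ avoid quadratic invariants. We need $2^i + 2^j \not\equiv 0$ for all $i,j$, i.e. $-1 \notin \langle 2\rangle$, which holds when the order $n$ of $2$ is odd (take $n$ an odd prime). We then use weights $w_i = 2^{i}$ with $f = \sum x_i^2 x_{i+1}$ requiring $2w_i + w_{i+1} \equiv 0$. To adjust, we take weights $w_i = (-2)^i$ instead and check the analogous conditions: the order of $-2$ is $2n$, and $-1 = (-2)^n$ lies in the group, which is bad. So we instead use $f = \sum x_i x_{i+1}^2$-type cyclic cubics with weights in $\langle 2\rangle$ only, e.g. $x_{i}^{2} \bar{x}$ where we need $w_{i+1}\equiv -2w_i$. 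We resolve this by allowing quartic or higher chains $x_i^{m}x_{i+1}$, choosing $m$ with $-m \in \langle 2 \rangle$, e.g. using the subgroup generated by $-2$ of odd order when $p \equiv 3 \bmod 8$-type conditions hold. Settling this bookkeeping and choosing the prime family compatibly is routine but fiddly.

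Third, and this is the main obstacle, we prove simplicity. We need a finite determinacy of $f$ (the loop cubic is isolated, being a quasi-homogeneous germ with isolated singularity for a suitable chain), and then that the equivariant versal deformation has no moduli. The invariant monomials below the Newton diagram must be absent or removable. Since the weights are ``sparse'' (only about $\log p$ of them in a cyclic group of size $p$), the invariant monomials of degree $\le d$ in the local algebra are few. We show that every invariant monomial lies in the equivariant Jacobian ideal or above the Newton diagram, except for those spanning a deformation with weights $> 0$ under the quasi-homogeneous grading, so that no modulus appears (all invariant basis monomials of the Milnor algebra have degree $<$ that of $f$). I would first try to carry this out by a counting argument on the exponents $\sum a_i 2^i \equiv 0 \pmod p$ with $\sum a_i$ small, using $p > 2^{n-1}$ to show such relations must be the chain relations themselves.
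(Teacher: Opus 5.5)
Your construction cannot produce a simple germ, so the difficulty lies in the choice of action itself, not in fiddly bookkeeping. Take your loop $f=x_1^2x_2+\dots+x_{n-1}^2x_n+x_n^2x_1$ with weights $w_i=(-2)^{i-1}$ and a prime $p\mid 2^n-(-1)^n$, $p\neq 3$. Then $\sum_i w_i=(1-(-2)^n)/3\equiv 0 \pmod p$, so $\det(\tau)=1$. Concretely, $x_1x_2\cdots x_n$ is the top element of the monomial basis $\{x^r : r_i\in\{0,1\}\}$ of $Q_f$, i.e.\ the Hessian class. It is invariant and has quasi-homogeneous degree $n/3\ge 1=\deg f$ once $n\ge 3$, so the criterion of your third step fails exactly there. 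Worse, you arrange that no invariant quadratic form exists, so the action is not real. Theorem 1.2 (via the Hessian argument of Theorem 4.1 together with Theorem 3.2) then says there is no equivariantly simple germ for this action at all. The identity $\sum_i(-m)^{i-1}=(1-(-m)^n)/(1+m)$ shows that the constant-exponent loops $\sum x_i^m x_{i+1}$ you suggest as a fix also have $\det(\tau)=1$ unless $p\mid m+1$. Nor can you use $p>2^{n-1}$ in the counting argument: $3$ always divides $2^n-(-1)^n$, so every prime divisor $p\neq 3$ satisfies $p\le (2^n+1)/3<2^{n-1}$. Your observation that $n\ge\log_2 p$ holds automatically is correct, but it is useless once the germ fails to be simple.

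The missing idea is to make the group order \emph{equal} to the modulus of the loop rather than a proper divisor of it. This forces unequal exponents, since for odd $n$ the number $d^n+1$ is divisible by $d+1$; that is exactly the factor $3$ you ran into. The paper takes $f=x_1^{d_1}x_2+\dots+x_n^{d_n}x_1$ with $n$ odd, weights $1,-d_1,d_1d_2,\dots$, and $p=d_1\cdots d_n+1$ prime. For $0\le r_i<d_i$, the weight $\sum_i(-1)^{i-1}r_i d_1\cdots d_{i-1}$ of $x^r$ has absolute value at most $d_1\cdots d_n-1<p$ by telescoping. So it is divisible by $p$ only if it vanishes, and peeling off the top digit inductively gives $r=0$. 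Hence $\nu(f)=1$, and $f$ is stable, therefore simple, of corank $n$. What remains is arithmetic: you need infinitely many primes with $\Omega(p-1)\ge\alpha\log_2(p-1)+\beta$, so that $p-1$ splits into about that many factors $d_i\ge 2$ (merging two of them if necessary to make $n$ odd). Linnik's theorem supplies these. The least prime $p\equiv 1\pmod{2^m}$ satisfies $p<C2^{mL}$, while $2^m\mid p-1$ gives $\Omega(p-1)\ge m>(\log_2(p-1)-\log_2 C)/L$, so $\alpha=1/L$ works. Choosing each $2^m$ larger than the previously found primes makes the primes distinct.
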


The last theorem means that the upper bound in the theorem 1.2. is in fact valid up to an order of magnitude: the corank of a simple singularity does in fact grow logarithmically as $p$ increases. 

Since any analytic action of a finite group is linearizable, the assumption that the group action is linear is not limiting: we will work with linear actions, however all the results can be generalized to analytic group actions in an obvious fashion.

\section{Definitions, notation  and auxiliary statements}
The letter $p$ will denote a prime number.
In a suitable local system of coordinates any analytic finite group action on $(\mathbb{C}^n,0)$ can be linearized (see \cite{2}). So we will consider all group actions to be induced by a linear representation of $G$.
Furthermore, any linear representation of a finite abelian group is diagonalazable, hence further we will usually consider only actions of the form $g \circ (x_1,\ldots ,x_n) \longrightarrow (\chi_1(g) x_1,\ldots , \chi_n(g) x_n)$ with $\chi_i$ being group characters of $\mathbb{Z}_p$.

For a function germ $f$ $J_f$ will denote the \textbf{Jacobian ideal} $<\frac{\partial f}{\partial x_1}, \ldots, \frac{\partial f}{\partial x_n}>$.
$Q_f$ is the Jacobian algebra $\mathbb{C}\{x_1, \ldots, x_n\} / J_f$. $Q_f$ always admits a monomial basis, i.e. the basis consisting of monomials.

If $f$ is invariant with respect to an action of a group $G$, the Jacobian ideal is invariant and therefore $G$ also acts on $Q_f$ by changes of coordinates: $g \circ h(x) = h ( g^{-1} (x))$. The representation of $G$ on $Q_f$ is called \textbf{the equvariant Milnor number} of $f$ and denoted $\mu_G(f)$. $\nu(f)$ is the multiplicity of the trivial representation in $\mu_G(f)$, i.e. the dimension of the space of invariant elements of $Q_f$.

If $f=x_1^2 + \ldots + x_k^2 + h(x_{k+1}, \ldots, x_n)$ $Q_f$ if isomorphic to $Q_h$. If $f$ is invariant with respect to some group action, they are isomorphic as group representations. In particular, $\mu(f) = \mu(h)$, $\nu(f) = \nu(h)$.

A group action on $(\mathbb{C}^n, 0)$  is called \textbf{real} if it admits an invariant quadratic form of rank $n$.

\section{Equivariantly simple and equivariantly stable germs}

There are multiple papers on the classification of equivariantly simple singularities (see., e.g., \cite{3}, \cite{4}, \cite{5}). Roughly speaking, an \textbf{equivariantly stable} singularity is the simplest of the equivariantly simple singularities, i.e. the one that is only adjacent to itself. If the case of real group action the equivariantly stable singularity is the nondegenerate one, as the deformation of a Morse function is again a Morse function. For the formal definitions of equivariant simplicity and stability see below.

We will denote $\mathfrak{m}_G$ the maximal ideal in the ring of $G$-invariant analytic germs. Here we only assume G to be finite, not nessesarily abelian or isomorphic to $\mathbb{Z}_p$. Consider $\mathcal{O}^{r}_{G}$ --- the space of $r$-jets of elements in $\mathfrak{m}_G$ with a critical point at the origin.  The group $D^r_G$ of $r$-jets of  $G$-equivariant diffeomorphisms  $(\mathbb{C}^n,0) \longrightarrow (\mathbb{C}^n,0)$ acts on this space.

A $G$-invariant germ $f:(\mathbb{C}^n,0) \longrightarrow (\mathbb{C},0)$ with a critical point at the origin is \textbf{equivariantly simple} if there is an $N \; \in \; \mathbb{N}$ such that for all $r$ large enough  $r$-jet of $f$ has a neighbourhood in $\mathcal{O}^{r}_{G}$ that only intersects at most $N$ $D^r_G$-orbits.

A $G$-invariant germ $f:(\mathbb{C}^n,0) \longrightarrow (\mathbb{C},0)$ with a critical point at the origin is \textbf{equivariantly stable} if for all large enough $r$  $D^r_G$-orbit of $r$-jet of $f$ in  $\mathcal{O}^{r}_{G}$ is open.

\begin{theorem}
$f$ is equivariantly stable iff $\nu(f)=1$.

\end{theorem}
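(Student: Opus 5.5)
The plan is to reduce stability to a tangent-space computation and then identify the normal space with the invariant part of the Jacobian algebra.

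\emph{Infinitesimal criterion.} First I would establish that the $D^r_G$-orbit of $j^r f$ in $\mathcal{O}^r_G$ is open iff its tangent space is the whole tangent space of $\mathcal{O}^r_G$ at $j^r f$. Both are complex algebraic objects: $D^r_G$ is an algebraic group acting algebraically on the affine space $\mathcal{O}^r_G$. An orbit of such an action is a constructible, locally closed smooth subvariety. It is therefore open iff its dimension equals $\dim \mathcal{O}^r_G$, that is, iff the orbit map is a submersion at $j^r f$.

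Next I would compute the two tangent spaces.
\begin{itemize}
\item The tangent space to $\mathcal{O}^r_G$ is the space of $r$-jets of invariant germs vanishing at $0$ with zero differential. Since for a nontrivial irreducible action there may be invariant linear forms, I would handle this uniformly: the tangent space consists of invariant germs $h$ with $h(0)=0$ and $dh(0)=0$.
\item The tangent space to the orbit consists of the $r$-jets of $\sum_i v_i\,\partial f/\partial x_i$, where $v=(v_1,\dots,v_n)$ ranges over germs of $G$-equivariant vector fields vanishing at $0$.
\end{itemize}

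\emph{Averaging.} The key algebraic step is that $\{ \sum v_i \partial_i f : v \text{ equivariant}\}$ equals $(J_f)^G$, the invariant part of the Jacobian ideal. One inclusion is clear. For the other, if $h=\sum a_i\partial_i f$ is invariant, I replace $a$ by its Reynolds average $\frac{1}{|G|}\sum_g g\cdot a$. Since $f$ is invariant, $df$ is an invariant $1$-form and pairing is $G$-equivariant, so averaging $a$ leaves $h$ unchanged and produces an equivariant field. Because $G$ is finite and we work over $\mathbb{C}$, averaging also commutes with taking the invariant part of the quotient, so $(Q_f)^G=\mathcal{O}^G/(J_f)^G$.

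Thus the normal space to the orbit is, modulo the vanishing conditions, $\mathfrak{m}^2\cap\mathcal{O}^G$ modulo $\mathfrak{m}J_f\cap\mathcal{O}^G$. This comparison is done up to high-order terms: finite determinacy lets us pass from jets to germs once $\mu(f)<\infty$. Then $\nu(f)=\dim (Q_f)^G$, and the constant $1$ is always a nonzero invariant element of $Q_f$ when $f$ has a critical point at $0$, so $\nu(f)\ge 1$. Stability should therefore correspond to the space spanned by $1$ being everything, i.e.\ $\nu(f)=1$.

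\emph{Main obstacle.} The delicate point, which I expect to be the main obstacle, is matching the bookkeeping of constants and linear terms with vector fields vanishing at $0$ versus arbitrary ones. I would first reduce via the splitting lemma, $f=Q(x')+h(x'')$, whose equivariant version holds by averaging the Morse lemma coordinate change. Then $\nu(f)=\nu(h)$ as noted in Section 2, so it suffices to treat $j^2h=0$.

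\emph{The two directions.}
\begin{itemize}
\item If $\nu=1$: every invariant $g\in\mathfrak{m}$ lies in $J_f$. Since $j^2h=0$, $J_f\subset\mathfrak{m}^2$, hence $\mathfrak{m}^G\subset\mathfrak{m}^2$ and there are no invariant linear or quadratic functions in the $x''$ variables. Such $g$ then lie in $\mathfrak{m}J_f$ by a Nakayama-type degree argument. This gives infinitesimal stability, and openness then follows from the criterion together with finite determinacy. Finite $\mu$ itself follows from $\nu<\infty$, using that $\mathcal{O}$ is a finite module over $\mathcal{O}^G$.
\item Conversely, if $\nu\ge 2$: there is a nonconstant invariant monomial $m\notin J_f$ representing a nonzero element of $(Q_f)^G$. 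Then $f+tm$ gives a direction transverse to the orbit, and the orbit is not open.
\end{itemize}

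For the Nakayama step I would try a weighted-homogeneous filtration argument first, and fall back on the standard Mather-type lemma (as in equivariant determinacy papers) if needed.
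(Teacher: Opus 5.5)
Your proof of $\nu(f)=1\Rightarrow$ stable takes a different route from the paper. The paper gets $\mu(f)<\infty$ from the separating property of invariants and then quotes Slodowy's equivariant versality theorem, so that the versal deformation is $f+\lambda$. You instead check infinitesimal stability by hand and use that orbits of the algebraic group $D^r_G$ on the affine space $\mathcal{O}^r_G$ are locally closed. That is more self-contained, and your finiteness argument ($\mathfrak{m}_G\mathcal{O}\subset J_f$, with $\mathcal{O}$ finite over $\mathcal{O}^G$) is cleaner than the paper's.

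This direction is sound, but the ``Nakayama-type degree argument'' should be replaced by the actual reason. Once there are no invariant linear forms in the $x''$ variables, the $x''$-space has no nonzero fixed vectors. So after Reynolds averaging, the coefficient field $a$ in $g=\sum a_i\,\partial h/\partial x''_i$ satisfies $a(0)=0$, i.e.\ $(J_h)^G=(\mathfrak{m}J_h)^G$; no weighted homogeneity is involved. You should also record the passage back from $h$ to $f$, which is easy: $(x')\cap\mathfrak{m}^2=\mathfrak{m}(x')\subset\mathfrak{m}J_f$.

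The converse, however, fails as written. The tangent space to $\mathcal{O}^r_G$ at $j^rf$ is $j^r\bigl((\mathfrak{m}^2)^G\bigr)$, and the tangent space to the orbit is $j^r\bigl((\mathfrak{m}J_f)^G\bigr)$. So a transverse direction must be an invariant element of $\mathfrak{m}^2$ that is not in $\mathfrak{m}J_f$, and an invariant monomial $m\notin J_f$ need not lie in $\mathfrak{m}^2$.

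Take the trivial action and $f=x^2+y^3$ (or $h=y^3$ after splitting). Then $J_f=(x,y^2)$, $\nu=2$, and the only nonconstant monomial outside $J_f$ is $y$. But $f+ty$ has no critical point at $0$, so this curve leaves $\mathcal{O}^r_G$ and says nothing about openness of the orbit. The actual transverse direction is $y^2$, which lies in $J_f$ but not in $\mathfrak{m}J_f=(x^2,xy,y^3)$.

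So identifying the normal space with $(Q_f)^G/\mathbb{C}\cdot 1$ is correct only as a dimension count (for isolated singularities), not representative by representative. This is exactly the constants-and-linear-terms bookkeeping you flagged, and it is left unresolved. (The paper's own argument for this direction also passes over this point, by speaking of the projection of $J_f$ rather than of $\mathfrak{m}J_f$.)

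To repair it:
\begin{itemize}
\item After the splitting, $j^2h=0$ gives $\mathfrak{m}''J_h\subset\mathfrak{m}''^3$, where $\mathfrak{m}''$ is the maximal ideal of $\mathbb{C}\{x''\}$. If $m$ is linear, use $m^2$ instead; otherwise $m\in\mathfrak{m}''^2$ already works.
\item Prove that stability of $f$ implies stability of $h$: restriction to $\{x'=0\}$ is equivariant and maps $\mathfrak{m}J_f$ onto $\mathfrak{m}''J_h$.
\item Since $\mu$ may be infinite in this direction, use Krull's intersection theorem rather than finite determinacy to get $m\notin\mathfrak{m}''J_h+\mathfrak{m}''^{\,r+1}$ for all large $r$.
\end{itemize}
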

\begin{proof}
Let $f$ be equivariantly stable. As the orbit of $r$-jet of $f$ is open for large enough $r$ the tangent space to the orbit has dimension equal to the dimension of $\mathcal{O}^{r}_{G}$ itself. The tangent space to the orbit  at $r$-jet of $f$ is the projection of the jacobian ideal of $f$ to  $\mathcal{O}^{r}_{G}$. But if for all $r$ large enough the projection of $J_f$ covers all  $\mathcal{O}^{r}_{G}$ $J_f$ contains $\mathfrak{m}_G$. Since $f$ has a critical point at the origin  $J_f$ does not contain 1 and $\nu(f)=1$.

Now assume $\nu(f)=1$. The Milnor number of $f$ is finite, as the invariants of a finite group action have the separating property, i.e. for any two orbits of the $G$-action there is an invariant function that is equal to 1 on the first orbit and 0 on the second. But if $\nu(f)=1$, then $J_f = \mathfrak{m}_G$ and any invariant function takes the same value at any point of the singular locus of $f$ as it does at the origin. So the separating property fails if the critical point of $f$ is non-isolated. Since  $f$ has a finite Milnor number we can apply the Slodowy theorem (see \cite{7}), according to which the invariant versal deformation of $f$ is equal to $f + \lambda_1 h_1 + \ldots + \lambda_{\nu} h_{\nu}$, $h_j$ --- linearly independent invariant elements of $Q_f$. Since $\nu(f)=1$, the versal deformation of $f$ is trivial, i.e. has the form $f_{\lambda} = f + \lambda$, and the equivariant stability follows immediately.
\end{proof}
In particular for the nondegenerate singularity $\mu(f)=\nu(f)=1$, as $Q_f$ is equal to $\mathbb{C}$.

\begin{theorem}The existence of equivariantly simple singularities for a given group action is equivalent to the existence of equivariantly stable singularities.
\end{theorem}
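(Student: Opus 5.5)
One direction is immediate. A stable germ is simple: if the $D^r_G$-orbit of $j^r f$ is open for all large $r$, then that orbit is itself a neighbourhood of $j^r f$ in $\mathcal{O}^r_G$ meeting exactly one orbit, so the definition of simplicity holds with $N=1$. The substance is the converse. Assuming some equivariantly simple germ $f$ exists, I will produce a germ with $\nu=1$ and then invoke the preceding theorem, which says that $\nu(f)=1$ is equivalent to equivariant stability.

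The idea is to deform $f$ equivariantly until $\nu$ can drop no further. First I check that $\mu(f)<\infty$. If the critical point were non-isolated, the invariant germs $f+\lambda h$ with $h\in\mathfrak{m}_G$ would give families in which a continuous invariant (the restriction of the germ to the $G$-invariant critical locus) varies. This would produce infinitely many orbits near $j^r f$, contradicting simplicity. Granting finiteness, the Slodowy theorem (as quoted in the proof of the preceding theorem) gives an equivariant versal deformation $F_\lambda = f+\lambda_1h_1+\dots+\lambda_\nu h_\nu$ with the $h_j$ spanning the invariant part of $Q_f$. Simplicity of $f$ passes to every germ appearing at critical points of $F_\lambda$ for small $\lambda$, since such germs are adjacent to $f$ and so lie in the finite set of orbits near $f$.

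Now I choose, among all $G$-invariant germs near $f$ with a critical point at the origin (after translating a $G$-fixed critical point of $F_\lambda$ to $0$), one with minimal $\nu$. Call it $g$; it is still simple. Suppose $\nu(g)\ge 2$. Then the versal deformation of $g$ has a nontrivial invariant direction $h\in\mathfrak{m}_G$ that is nonzero in $Q_g$, and we can consider the family $g+t h$. Simplicity says that only finitely many orbits occur in this family, so the germ at the origin has constant orbit type on a punctured neighbourhood of $t=0$. The key step is to show that, for generic small $t$ and a generic direction $h$, either the critical point at $0$ becomes simpler (so $\nu$ drops), or it splits into critical points of which one lies at a $G$-fixed point and has strictly smaller $\nu$. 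In either case $\nu$ decreases, contradicting minimality, so $\nu(g)=1$ and $g$ is stable.

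I expect this last step to be the main obstacle: showing that $\nu$ strictly decreases under a generic equivariant perturbation, that is, semicontinuity of $\nu$ together with the assertion that it cannot stay constant along the whole versal base. My first attempt will use the $G$-equivariant version of conservation of Milnor number. The equivariant Milnor representation is conserved under deformation, summed over the critical points, with non-fixed critical points contributing multiples of regular-orbit representations. If $\nu$ stayed constant at the origin for all small $\lambda$, the orbit of $j^r g$ would contain the whole versal family. This would make the tangent space $J_g$ contain $\mathfrak{m}_G$ modulo high order, forcing $\nu(g)=1$ by the argument in the first half of the proof of the preceding theorem.
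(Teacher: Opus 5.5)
Your easy direction is fine, but the converse has a genuine gap exactly at the step you flag. Nothing in the proposal shows that $\nu$ must drop. The implication you offer to close the gap is also false: namely, that if $\nu$ stays constant at the origin for all small $\lambda$, then the orbit of $j^r g$ contains the whole versal family.

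Take $G=\mathbb{Z}_3$ acting on $\mathbb{C}^3$ by multiplication by a primitive cube root of unity, and $g=x^3+y^3+z^3$. The invariant part of $Q_g$ is spanned by $1$ and $xyz$, so the versal family is $g+\lambda_1+\lambda_2\,xyz$. Every member has $\nu=2$: these are homogeneous cubics with an isolated singularity, so the origin is their only critical point and $\mu_G$ is unchanged. Yet the $j$-invariant of the plane cubic $x^3+y^3+z^3+\lambda_2 xyz$ varies with $\lambda_2$, so these germs lie in pairwise different orbits.

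Conservation of the equivariant Milnor representation only tells you that $\nu$ at the origin stays constant exactly when no other critical points split off. Neither that nor semicontinuity plus minimality can force a $\nu$-constant family to be trivial. What rules this out is simplicity, and you never use simplicity in a form that produces an open orbit.

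Two side remarks. Non-fixed critical points contribute multiples of the regular representation only when their stabilisers are trivial. That holds for $\mathbb{Z}_p$, but not for the arbitrary finite $G$ of this theorem. Also, your argument for $\mu(f)<\infty$ is not a proof, since the restriction of a germ to its critical locus is not an invariant along a family whose critical locus moves.

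The missing idea is a dimension count, and it makes the minimal-$\nu$ scaffolding, Slodowy's theorem and the conservation law unnecessary. Fix $r_0\ge |G|$ such that a neighbourhood of $j^{r_0}f$ in $\mathcal{O}^{r_0}_G$ meets only finitely many $D^{r_0}_G$-orbits. Orbits are immersed submanifolds, and finitely many submanifolds of dimension less than $\dim\mathcal{O}^{r_0}_G$ cannot cover an open set. So one of these orbits, say that of $j^{r_0}g$, has full dimension and is open. Its tangent space, the projection of $J_g$, is then all of $\mathcal{O}^{r_0}_G$: every invariant polynomial of degree at most $r_0$ is the $r_0$-jet of an element of $J_g$. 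Because $r_0\ge|G|$, Noether's degree bound for generators of the invariant ring propagates this to all degrees. Hence the orbit of $j^r g$ is open for all $r\ge r_0$, and $g$ is stable. This is the paper's proof.

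Your closing sentence is the right endgame, and the choice $r_0\ge|G|$ is what makes ``modulo high order'' sufficient. But its hypothesis has to come from the open orbit supplied by simplicity, not from constancy of $\nu$.
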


\begin{proof}
In one direction this statement is self-evident as stable singularities are by definition simple.

Let $f$ be an equivariantly simple singularity. Choose $r_0 \geq |G|$ such that a neighbourhood of $r_0$-jet of $f$ intersects with only a finite number of $D^{r_0}_G$-orbits. Each of the orbits is a submerged manifold, and since a finite number of orbits cover a neighbourhood of $j^{r_0} (f)$ one of the orbits must be open. Let $g$ be the germ with an open orbit. The tangent space to the orbit of $j^{r_0} (g)$ has the maximal dimension (equal to the dimension of $\mathcal{O}^{r_0}_{G}$). Since the tangent space is the projection of the Jacobian ideal of $g$ to  $\mathcal{O}^{r_0}_{G}$ for each invariant polynomial $q$ of degree $\leq r_0$ there is an $\tilde{q} \; \in \; J_g$ with $j^{r_0} (\tilde{q}) = q$. According to Noether's theorem (see, e.g., \cite{8}, p.9) the ring of invariant polynomials can be generated by homogeneous polynomials of degree at most $|G|$,  so for each $r>r_0$ and any invariant polynomial $q$ of degree  $r$ there is also a $\tilde{q} \; \in \; J_g$ with $j^{r} (\tilde{q}) = q$, i.e. the orbit of $g$ is open for all $r>r_0$.

\end{proof}

In the course of proving theorem 1.1. in \cite{1}, we have also shown the following:

\begin{theorem} Let $f$ be the equivariantly stable singularity with respect to a linear group action $\tau$ of a group of prime order $\mathbb{Z}_p$. $\mu_{ \mathbb{Z}_p}(f)$ has one of the following isomorphism classes as a linear representation of $\mathbb{Z}_p$:

1)$\tilde{\mathbb{C}}, det(\tau)=1$;

2)$det(\tau) \otimes W, det(\tau) \neq 1$;

3)$2 \; det(\tau) \oplus (det(\tau) \otimes W), det(\tau) \neq 1$;

4)$\tilde{\mathbb{C}} \oplus 2 W, det(\tau)=1$.

Here $\tilde{\mathbb{C}}$ denotes the trivial one-dimensional representation of $\mathbb{Z}_p$ and $W$ denotes the restriction of the regular representation of $\mathbb{Z}_p$ to $\{x_1 + \ldots + x_p=0\}$.

\end{theorem}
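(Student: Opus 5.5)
The plan is to compute the character of the representation $\mu_{\mathbb{Z}_p}(f)$ on $Q_f$, using the fact that for a group of prime order every nontrivial element has the same fixed subspace. The tool I would invoke is Wall's equivariant version of the Milnor number formula. Let $\Omega_f=\Omega^n/df\wedge\Omega^{n-1}$. As a $G$-module $\Omega_f\cong Q_f\otimes det(\tau)$, and for $g\in G$
$$\mathrm{tr}(g\,|\,\Omega_f)=(-1)^{n-n^g}\,\mu(f|_{V^g}),$$
where $V^g$ is the fixed subspace of $g$, $n^g=\dim V^g$, and $\mu$ of a function on a zero-dimensional space is $1$. Since $G=\mathbb{Z}_p$ has prime order, every $g\neq 1$ has the same fixed space $V^G$. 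The right-hand side is therefore one integer $\pm\mu(f|_{V^G})$ for all $g\neq 1$. A representation of $\mathbb{Z}_p$ whose character is constant on nontrivial elements is a combination of $\tilde{\mathbb{C}}$ and the regular representation. Hence there are integers $c,d\geq 0$ with
$$Q_f\otimes det(\tau)\cong c\,\tilde{\mathbb{C}}\oplus d\,W,$$
and the character of the right-hand side at $g\neq 1$ equals $c-d$.

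Next I show that $\mu(f|_{V^G})=1$, i.e.\ that $f|_{V^G}$ is Morse, or that $V^G=0$. Choose diagonal coordinates. The derivatives $\partial f/\partial x_i$ along fixed coordinates are invariant. The derivatives along non-fixed coordinates transform by nontrivial characters, so they vanish on $V^G$. Consequently $J_{f|_{V^G}}$ is the restriction of $J_f$ to $V^G$. By the preceding theorem $\nu(f)=1$, which (as shown in its proof) means $J_f\supseteq\mathfrak{m}_G$. Every function on $V^G$ vanishing at $0$ is the restriction of an invariant function vanishing at $0$, for instance of the same function pulled back along the invariant projection. Therefore $J_{f|_{V^G}}$ contains the maximal ideal of $V^G$, so $f|_{V^G}$ is Morse and $\mu(f|_{V^G})=1$. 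This gives $c-d=\pm 1$.

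Finally I untwist by $det(\tau)$ and impose $\nu(f)=1$. Write $D=det(\tau)$, so that $Q_f\cong D^{-1}\otimes(c\,\tilde{\mathbb{C}}\oplus d\,W)$. Since $D\otimes(\text{regular})\cong\text{regular}$, tensoring by $D$ or $D^{-1}$ turns $\tilde{\mathbb{C}}\oplus W$ into $D^{\pm1}\oplus (D^{\pm1}\otimes W)$.
\begin{itemize}
\item If $D=1$, the multiplicity of the trivial representation in $Q_f$ is $c$. So $c=1$ and $d\in\{0,2\}$, which gives cases 1) and 4).
\item If $D\neq 1$, the trivial representation occurs in $D^{-1}\otimes W$ exactly once and not in $D^{-1}$. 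So $\nu(f)=d=1$ and $c\in\{0,2\}$. Rewriting the answer in the form of the statement gives $D\otimes W$ and $2D\oplus(D\otimes W)$ up to the identification $D^{-1}\leftrightarrow D$ coming from the convention $g\circ h=h\circ g^{-1}$ for the action on $Q_f$. These are cases 2) and 3).
\end{itemize}

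The main obstacle is the first step. One has to make sure that Wall's formula applies in the form stated, including the sign $(-1)^{n-n^g}$ and the isomorphism $\Omega_f\cong Q_f\otimes det$, for a non-quasihomogeneous germ with an isolated singularity. If a direct citation is not available, I would first try a deformation argument. Take an equivariant morsification of $f$ (this needs finite $\mu$, which was established in the preceding theorem) and compute the trace of $g$ through the Lefschetz number on the Milnor fibre. The Milnor fibre is homotopy equivalent to a bouquet of $\mu$ spheres, and the fixed points of $g$ lie on the Milnor fibre of $f|_{V^G}$. Then I would carefully track the twist by $det(\tau)$ and the sign conventions, since these decide which of the four representation classes is labelled which.
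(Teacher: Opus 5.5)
The paper gives no proof of this statement; it is quoted from the author's earlier work, so there is no argument here to compare yours with. On its own terms your proof is correct, granted Wall's theorem. That theorem does hold for arbitrary isolated singularities: it rests on a $G$-equivariant identification of $\Omega_f$ with $\tilde H^{n-1}(F;\mathbb{C})$, where $F$ is the Milnor fibre, followed by Lefschetz. Each of your steps checks out:
\begin{enumerate}
\item For prime order, every $g\neq1$ has the same fixed space $V^G$, so $\Omega_f\cong c\,\tilde{\mathbb{C}}\oplus d\,W$.
\item The identity $J_{f|_{V^G}}=J_f|_{V^G}\supseteq\mathfrak{m}_G|_{V^G}=\mathfrak{m}_{V^G}$ gives $\mu(f|_{V^G})=1$, hence $c-d=\pm1$.
\item The condition $\nu(f)=1$ then leaves exactly the four listed classes.
\end{enumerate}
It is expected that case 4) survives. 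No character computation can exclude it, and the paper removes it only afterwards, in Theorem 4.1, with the Hessian argument.

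The step to repair is the determinant twist, which you treat as a relabelling. For odd $p$ and $D\neq1$ it is not one: $D\otimes W$ (all characters except $D$) and $D^{-1}\otimes W$ (all except $D^{-1}$) are non-isomorphic. So you must fix the convention once and keep it.

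With the paper's action $g\circ h=h\circ g^{-1}$, extended to forms by $(g^{-1})^*$, the volume form is multiplied by $\det(\tau(g))^{-1}$. Hence $\Omega_f\cong Q_f\otimes\det(\tau)^{-1}$, i.e.\ $Q_f\cong\det(\tau)\otimes\Omega_f$. This is the reverse of the isomorphism you wrote, and with it cases 2) and 3) come out directly. As a check, for $f=x^p$ the basis $x^k$, $0\le k\le p-2$, of $Q_f$ carries the characters $\varepsilon^{-k}$, which miss exactly $\varepsilon=\det(\tau)$.

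By contrast, the sign $(-1)^{n-n^g}$ that you flag as the main risk does not matter, since the statement allows both signs of $c-d$.

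Finally, drop the equivariant morsification from your fallback. Such morsifications need not exist: for $p\ge3$, every invariant deformation of $x^p$ is a series in $x^p$ and keeps a degenerate critical point at $0$. They are also unnecessary, since $G$ acts on the Milnor fibre directly. Lefschetz then gives the character of $\tilde H^{n-1}(F;\mathbb{C})$. What you would actually have to supply is the $G$-isomorphism of that cohomology with $\Omega_f$.
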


This statement will be used in the following section.
\section{Proof of theorem 1.2.}
 \begin{theorem}
 
 There is no equivariantly stable $\mathbb{Z}_p$-invariant function germ $f$ with $\mu_{ \mathbb{Z}_p}(f)=\tilde{\mathbb{C}} \oplus 2 W$ 
 
 \end{theorem}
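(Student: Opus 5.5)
The plan is to derive a contradiction from the socle of the Jacobian algebra. By Theorem 3.3, $\mu_{\mathbb{Z}_p}(f)=\tilde{\mathbb{C}} \oplus 2W$ can occur only when $det(\tau)=1$. In that case $\mu(f)=\dim Q_f = 1 + 2(p-1) = 2p-1 > 1$. Also $\nu(f)=1$, either by Theorem 3.1 or directly from the decomposition, since $W$ contains no trivial summand. So the trivial isotypic component of $Q_f$ is one-dimensional and is spanned by the class of $1$. I will show that $Q_f$ nevertheless contains a nonzero invariant element of $\mathfrak{m}Q_f$, which is impossible.

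The invariant element will be the Hessian $\mathrm{Hess}_f=\det\left(\frac{\partial^2 f}{\partial x_i \partial x_j}\right)$. First I check that it is $\mathbb{Z}_p$-invariant. If $A$ is the matrix of a group element and $f(Ax)=f(x)$, differentiating twice gives $\mathrm{Hess}(f)(x)=A^{T}\,\mathrm{Hess}(f)(Ax)\,A$. Taking determinants, $\mathrm{Hess}_f(x)=det(A)^2\,\mathrm{Hess}_f(Ax)=\mathrm{Hess}_f(Ax)$, because $det(\tau)=1$. Since $J_f$ is invariant, the class $[\mathrm{Hess}_f]\in Q_f$ is therefore invariant under the induced action.

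Next I use the classical fact about isolated singularities: $Q_f$ is a Gorenstein local algebra, its socle is one-dimensional, and it is spanned by the class of the Hessian. Concretely, $\mathrm{res}\,[\mathrm{Hess}_f]=\mu(f)\neq 0$ for the Grothendieck residue. Finiteness of $\mu(f)$ was already established in the proof of Theorem 3.1. The fact applies because $\mu(f)=2p-1>1$, so $Q_f$ is not a field, its socle lies in $\mathfrak{m}Q_f$, and hence $[\mathrm{Hess}_f]$ is a nonzero element of $\mathfrak{m}Q_f$.

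This gives the contradiction. The invariant element $[\mathrm{Hess}_f]$ must lie in the trivial isotypic component $\mathbb{C}\cdot[1]$. So $[\mathrm{Hess}_f]=c[1]$, and $c\neq 0$ because the class is nonzero. Then $[1]\in\mathfrak{m}Q_f$, which contradicts the fact that $Q_f$ is a local ring with $\dim Q_f>1$. Equivalently, $\mathrm{Hess}_f$ would be a unit, so $f$ would be Morse with $\mu(f)=1$.

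The main point needing care is the identification of the socle with the Hessian class, in particular its nonvanishing. I will quote it from the standard theory of the residue pairing rather than reprove it. I will also make sure that the action on $Q_f$ used in the definition of $\mu_G$, namely $g\circ h = h\circ g^{-1}$, is the same one under which the Hessian class is invariant. This holds, since invariance under $A$ implies invariance under $A^{-1}$.
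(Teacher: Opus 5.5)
Your proposal is correct and follows the paper's proof. Both arguments use three facts: $\det(\tau)=1$ makes the Hessian invariant, its class in $Q_f$ is nonzero (it spans the socle), and the only invariants in $Q_f$ are multiples of $[1]$, so the Hessian is a unit, $f$ is Morse, and this contradicts $\mu(f)=2p-1$; your version just states the step from ``invariant and nonzero'' to ``unit'' more carefully than the paper does.
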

 
 \begin{proof} 
 
Assume that an equivariantly stable function $f$ has equivariant Milnor number equal to  $\tilde{\mathbb{C}} \oplus 2 W$. Then $det(\tau)=1$. If $det(\tau)=1$ the Hessian determinant of any invariant function is again an invariant function. It is known that  Hessian determinant of $f$ does not belong to the Jacobian ideal of $f$ for any $f$ with finite Milnor number (\cite{9}, p.100). Since there is only one invariant monomial in the monomial basis of $Q_f$ (a unit) this means that the Hessian deteminant of $f$ is nonzero at the origin, so $f$ has a nondegenerate singularity and  $\mu_{ \mathbb{Z}_p}(f)=\tilde{\mathbb{C}}$.
 
 \end{proof}

The rest of nontrivial isomorphism classes can be realized by germs $f=x^p$ (with the obvious $\mathbb{Z}_p$-action) and $x^p + x y^2$ (with the action $(x,y) \longrightarrow (\varepsilon x, \varepsilon^{\frac{p-1}{2}} y)$ for a nontrivial root of unity $\varepsilon$ of degree $p$).

\medskip

Let $f$ be a germ equivariantly stable with respect to a linear $\mathbb{Z}_p$-action $\tau$ for some prime p. 
Terms of degree 2 in the Taylor series of a stable germ must add up to a quadratic form of maximal rank  $rk(\tau)$. According to the splitting lemma in some local system of coordinates  $f$ is equal to $x_1^2 + \ldots + x_{rk(\tau)}^2 + h(x_{rk(\tau)+1}, \ldots, x_n)$, with $j^2 (h)=0$. Then $\mu(f) = \mu(h)$, $\mu(h) \geq 2^{n-rk(\tau)}$ (as $h$ has no terms of degree 2 in its Taylor series). The subspace $\{x_1= \ldots = x_{rk(\tau)} = 0\}$ is invariant (since it is the kernel of an invariant bilinear form). 

Since the Jacobian algebras of $f$ and $h$ are isomorphic $\nu(h) = \nu(f) = 1$, i.e. $h$ is also an equivariantly stable function (with respect to $\mathbb{Z}_p$-action  on  $\{x_1= \ldots = x_{rk(\tau)} = 0\}$). Due to theorem 4.1. it is now known that $\mu_{ \mathbb{Z}_p}(h)$ for $h$ with $\nu(h)=1$ can only equal $\tilde{\mathbb{C}}$ if $det(\tau)=1$ or either $det(\tau) \otimes W$ or $2 \; det(\tau) \oplus (det(\tau) \otimes W)$ if $det(\tau) \neq 1$. Therefore the Milnor number of $h$ is at most $p+1$ if $det(\tau) \neq 1$ or is equal to $1$ if $det(\tau) = 1$. Since $\mu(f) = \mu(h) \geq 2^{n-rk(\tau)}$ this concludes the proof of theorem 1.2.

\section{Proof of theorems 1.3. and 1.4.}

Let $n\geq2$ be an positive integer, $d_1, \ldots, d_n \geq 2$ -- natural numbers, $g$ -- generator of a group  $\mathbb{Z}_{d_1\ldots d_n +  (-1)^{n-1}}$,  $\epsilon$ -- primitive root of unity $1$ of degree $d_1\ldots d_n +  (-1)^{n-1}$.
 $G_n = \mathbb{Z}_{d_1\ldots d_n + (-1)^{n-1}}$ acts on $\mathbb{C}^n$ in the following fashion: 
 
 \begin{equation}{\nonumber}
g \circ (x_1, \ldots, x_n) = (\epsilon x_1, \epsilon^{-d_1} x_2, \ldots, \epsilon^{(-1)^{n-2} d_1 \ldots d_{n-2}} x_{n-1}, \epsilon^{(-1)^{n-1} d_1 \ldots d_{n-1}} x_n)
 \end{equation}

 The polynomial $f = x_1^{d_1} x_2 + x_2^{d_2} x_3 + \ldots + x_{n-1}^{d_{n_1}} x_n + x_n^{d_n} x_1$ is invariant with respect to this action. It has an isolated critical point, as it is an invertible polynomial of the loop type (see., e.g., \cite{10,11}), and its Milnor number is equal to $d_1\ldots d_n$.
 
\begin{theorem}
$f$ is an equivariantly stable $G_n$-invariant germ.
\end{theorem}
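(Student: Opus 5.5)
By Theorem 3.1 it suffices to show $\nu(f)=1$, that is, that the only $G_n$-invariant element of a monomial basis of $Q_f$ is the unit. The plan is to compute the characters of all monomials in an explicit monomial basis of $Q_f$ and check that none except $1$ is trivial.

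\textbf{Weights.} Write $N=d_1\cdots d_n+(-1)^{n-1}$ and let $w_i$ be the exponent with $g\circ x_i=\epsilon^{w_i}x_i$. Then $w_1=1$ and $w_{i+1}=-d_iw_i$, so $w_i=(-1)^{i-1}d_1\cdots d_{i-1}$. The monomial $x_1^{a_1}\cdots x_n^{a_n}$ is invariant iff
\[
\sum_i a_iw_i\equiv 0 \pmod N .
\]

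\textbf{Monomial basis.} For the loop polynomial the partials are $\partial_i f=d_ix_i^{d_i-1}x_{i+1}+x_{i-1}^{d_{i-1}}$, with indices taken cyclically. A standard monomial basis of $Q_f$ for loop type (as in \cite{10,11}, or derived directly) is the set of $x^a$ with $0\le a_i\le d_i-1$ for all $i$. This set has exactly $d_1\cdots d_n=\mu(f)$ elements. I would cite this basis. If it is not available, I would verify spanning by reducing modulo $J_f$ using the relations $x_{i-1}^{d_{i-1}}\equiv -d_ix_i^{d_i-1}x_{i+1}$, and then get independence from the dimension count $\mu=d_1\cdots d_n$ already stated.

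\textbf{Main step: no nontrivial invariant basis monomial.} Fix $0\le a_i\le d_i-1$, not all zero, and set $S=\sum a_i(-1)^{i-1}d_1\cdots d_{i-1}$. This is a signed mixed-radix expansion, so it is bounded by
\[
|S|\le \sum_{i}(d_i-1)d_1\cdots d_{i-1}=d_1\cdots d_n-1 .
\]
The terms alternate in sign, and I would sharpen this with two cases.

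If $n$ is odd, then $N=d_1\cdots d_n+1$. Since $|S|\le N-2$, the only possibility is $S=0$. The mixed-radix digits are unique; after absorbing signs, the relation $-d_iD=(-1)\cdot d_iD$ lets one carry digit by digit, and carrying from the lowest index shows that $S=0$ forces $a_1\equiv 0 \pmod{d_1}$, hence $a_1=0$, and inductively all $a_i=0$.

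If $n$ is even, then $N=d_1\cdots d_n-1$, so $S=\pm N$ must also be excluded. The extreme values of $S$ are attained only by alternating choices of $a_i\in\{0,d_i-1\}$. The maximal positive value is
\[
\sum_{i\ \text{odd}}(d_i-1)d_1\cdots d_{i-1},
\]
which falls strictly short of $N$ once the negative terms are removed. The analogous bound holds for the negative side. Reducing $S\equiv 0\pmod N$ modulo $d_1$ then gives $a_1=0$, and dividing by $d_1$ lets one proceed inductively, with the cyclic wrap-around accounting for the $\pm1$ in $N$.

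I expect this arithmetic step to be the main obstacle: showing that a signed mixed-radix number with digits $0\le a_i<d_i$ is divisible by $N$ only when it vanishes. I would handle it by induction on $i$. Reduce modulo $d_1$, use $N\equiv(-1)^{n-1}\pmod{d_1}$ to control the multiple $kN$ with $S=kN$ and $|k|\le 1$, and carry. Once only the unit is invariant, $\nu(f)=1$, and Theorem 3.1 yields equivariant stability.
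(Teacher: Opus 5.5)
Your proposal is correct and is essentially the paper's proof: reduce to $\nu(f)=1$ via Theorem 3.1, use the box basis $0\le a_i<d_i$, bound the signed mixed-radix sum by the telescoping value $d_1\cdots d_n-1$ to force $S=0$ (with an extra sign argument ruling out $S=\pm N$ when $n$ is even), and then peel off the digits inductively. The differences are cosmetic: you bound the positive and negative parts of $S$ separately instead of using strictness of the triangle inequality, and you peel digits from $a_1$ upward modulo $d_1$ rather than from $a_n$ downward; once $S=0$ holds as an integer, your closing remarks about reducing $S\equiv 0 \pmod N$ modulo $d_1$ and the ``wrap-around'' are unnecessary.
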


\begin{proof}

Consider the case of odd $n$, so that $G_n= \mathbb{Z}_{d_1\ldots d_n + 1}$. To prove that $f$ is stable we need to establish that $\nu(f)=1$, i.e. that the only invariant element in some monomial basis of $Q_f$ is $1$. The monomial basis of $Q_f$ consists (see, e.g. \cite{10}) of monomials $x_1^{r_1}\ldots x_n^{r_n}, 0 \leq r_i < d_i$. Proving that such monomial is only invariant when $r_1 = r_2 = \ldots = r_n =0$ is obviously equivalend to proving that 

$r_1 - r_2 d_1 + \ldots + (-1)^{n-1} r_n d_1 d_2 \ldots d_{n-1} \equiv 0 (mod \; d_1\ldots d_n + 1)$ only if $r_1 = r_2 = \ldots = r_n =0$ for $r_i < d_i$.

 $| r_1 - r_2 d_1 + \ldots + (-1)^{n-1}  r_n d_1 d_2 \ldots d_{n-1} | \leq | r_1| + |r_2 d_1| + \ldots + | r_n d_1 d_2 \ldots d_{n-1} |$. As $r_i < d_i$  $| r_1| + |r_2 d_1| + \ldots + | r_n d_1 d_2 \ldots d_{n-1} | \leq (d_1-1) + (d_2-1)d_1 + \ldots + (d_n - 1) d_1 d_2 \ldots d_{n-1}$. The last sum is a telescopic series equal to $d_1 d_2 \ldots d_n - 1$. 
 
 Therefore $r_1 - r_2 d_1 + \ldots + (-1)^{n-1}  r_n d_1 d_2 \ldots d_{n-1} \equiv 0 (mod \; d_1\ldots d_n + 1)$ 
 
 only if $r_1 - r_2 d_1 + \ldots + (-1)^{n-1}  r_n d_1 d_2 \ldots d_{n-1} = 0$. 
 
 By a similar argument, the sum of first $n-1$ terms in the equality $r_1 - r_2 d_1 + \ldots + (-1)^{n-1}  r_n d_1 d_2 \ldots d_{n-1} = 0$ has absolute value at most $d_1 d_2 \ldots d_{n-1} - 1$, so $r_1 - r_2 d_1 + \ldots + (-1)^{n-1}  r_n d_1 d_2 \ldots d_{n-1} = 0$ means $r_n=0$. Proceeding inductively, we obtain $r_1 = r_2 = \ldots = r_n =0$. 
 
 The case of even $n$ is basically identical, it only needs to be noted that the inequality  $| r_1 - r_2 d_1 + \ldots + (-1)^{n-1}  r_n d_1 d_2 \ldots d_{n-1} | \leq d_1 \ldots d_n -1$ is actually strict, as the sum $r_1 - r_2 d_1 + \ldots + (-1)^{n-1}  r_n d_1 d_2 \ldots d_{n-1}$ always has either some terms of different signs or some zero terms.
\end{proof}

If $d_1\ldots d_n + (-1)^{n-1}$ is a prime number $p$ such $f$ gives us an example of a stable (and therefore simple) $\mathbb{Z}_p$-invariant function of corank $n$. We will concentrate on odd $n$, to be definite, so $d_1 \ldots d_n = p-1$. Hence, to prove that the corank of a stable $f$ can be arbitrarily large it just needs to be shown that for each $N$ there is a prime number $p$ such that $p-1$ has a decomposition $p-1=d_1 \ldots d_n$ with at least $N+1$ nontrivial factors. This can be proven using a theorem of Erd\H{o}s (\cite{12}):
 
 \begin{theorem}
 
 As $n$ tends to infinity for any $\varepsilon>0$ and for allmost all prime $p$ not exceding $n$ the number $p-1$ has more than $(1-\varepsilon)log(log(n))$ distinct prime divisors.

 \end{theorem}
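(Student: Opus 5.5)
The statement is Erdős's theorem, quoted from \cite{12}; the paper uses it as a black box. If I had to prove it, I would use the Turán second-moment method, transplanted to the shifted primes $p-1$. Write $\omega(m)$ for the number of distinct prime divisors of $m$, and fix a small $\delta>0$. Set
$$\omega_\delta(m)=\#\{q \text{ prime}: q\mid m,\ q\le n^{\delta}\}.$$
Then $\omega(p-1)\ge\omega_\delta(p-1)$ for every $p$. Moreover $\sum_{q\le n^\delta}1/q=\log\log n+\log\delta+O(1)$. So it suffices to show that $\omega_\delta(p-1)$ is concentrated around $\log\log n$ for almost all primes $p\le n$. Choosing $\delta$ fixed, say $\delta=1/5$, costs only an additive constant, which is absorbed by $\varepsilon\log\log n$.

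\textbf{First moment.} Write $\pi(n)$ for the number of primes up to $n$ and $\pi(n;q,1)$ for those congruent to $1$ modulo $q$. Then
$$\sum_{p\le n}\omega_\delta(p-1)=\sum_{q\le n^\delta}\pi(n;q,1).$$
I would evaluate this sum by the Bombieri--Vinogradov theorem. For moduli up to $n^{1/2}(\log n)^{-B}$ it gives $\pi(n;q,1)=\pi(n)/(q-1)+E_q$, with $\sum_q|E_q|\ll n(\log n)^{-A}$. Since $\delta<1/2$, the mean of $\omega_\delta(p-1)$ over primes $p\le n$ is therefore $\log\log n+O(1)$. Erdős himself used Page's theorem together with Brun's sieve; Bombieri--Vinogradov is the cleaner modern substitute.

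\textbf{Second moment.} Expanding the square,
$$\sum_{p\le n}\omega_\delta(p-1)^2=\sum_{q_1\ne q_2\le n^\delta}\pi(n;q_1q_2,1)+\sum_{q\le n^\delta}\pi(n;q,1).$$
For the off-diagonal terms only an upper bound is needed. The Brun--Titchmarsh inequality $\pi(n;k,1)\ll n/(\varphi(k)\log(n/k))$ with $k=q_1q_2\le n^{2\delta}$ bounds them by $(1+O(1/\log\log n))\,\pi(n)(\log\log n)^2$; Bombieri--Vinogradov would also work here, since $2\delta<1/2$. This step, controlling the pair sum so that the main terms $(\log\log n)^2$ cancel to within $O(\log\log n)$, is the step I expect to be the main obstacle. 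A crude Brun--Titchmarsh constant would spoil the cancellation. I would therefore use Bombieri--Vinogradov for all $q_1q_2\le n^{1/2-\eta}$, and treat the remaining range separately with Brun--Titchmarsh. That remaining range is only nonempty if $\delta$ is taken larger; with $\delta\le 1/5$ it is empty. Combining the two moments gives variance
$$\frac{1}{\pi(n)}\sum_{p\le n}\bigl(\omega_\delta(p-1)-\log\log n\bigr)^2=O(\log\log n).$$

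\textbf{Conclusion.} By Chebyshev's inequality, the proportion of primes $p\le n$ with $\omega_\delta(p-1)\le(1-\varepsilon)\log\log n$ is $O\bigl(1/(\varepsilon^2\log\log n)\bigr)$, which tends to $0$. This proves the statement. The variant used in theorem 1.3, with $\log\log p$ in place of $\log\log n$, follows at once. The primes $p\le\sqrt n$ form a negligible proportion of those up to $n$, and for the remaining $p$ we have $\log\log p=\log\log n+O(1)$.
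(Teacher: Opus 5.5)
The paper does not prove this statement. It quotes Erd\H{o}s's theorem and uses it as a black box, so your Tur\'an-type variance argument is a genuinely different route. It is correct.

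With $\delta=1/5$ every modulus that occurs lies in the Bombieri--Vinogradov range: $q\le n^{1/5}$ in the first moment and $k=q_1q_2\le n^{2/5}$ in the second. Each such $k$ comes from exactly two ordered pairs $(q_1,q_2)$, so the error terms total $O(n(\log n)^{-A})$. The off-diagonal main term is at most $\pi(n)\bigl(\sum_{q\le n^{\delta}}\frac{1}{q-1}\bigr)^2=\pi(n)\bigl((\log\log n)^2+O(\log\log n)\bigr)$. Combined with the first moment $\log\log n+O(1)$, this gives variance $O(\log\log n)$ and the Chebyshev conclusion.

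You should delete the sentence claiming that Brun--Titchmarsh alone bounds the pair sum by $(1+O(1/\log\log n))\,\pi(n)(\log\log n)^2$. The constant it provides is $2/(1-2\delta)$, not $1$, which is exactly the spoiling effect you describe one sentence later. Since with $\delta\le 1/5$ you never invoke it, this is a misstatement, not a gap. The passage to $\log\log p$ is also simpler than you make it: $p\le n$ gives $\log\log p\le\log\log n$ directly.

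As for what each route buys: the citation costs the paper nothing and is all Theorem 1.3 needs. Your argument proves more than is asked, namely two-sided concentration of $\omega(p-1)$. The price is Bombieri--Vinogradov, needed because the $(\log\log n)^2$ terms cancel only if the asymptotic for $\pi(n;k,1)$ holds with constant exactly $1$ for moduli up to a power of $n$.

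For the one-sided statement that machinery can be avoided, as follows.
\begin{itemize}
\item Write $p-1=ab$ with $a$ the $y$-smooth part, where $y=n^{1/(\log\log n)^2}$. Then the primes with $a>\sqrt n$ are negligible by Rankin's trick.
\item For a given $a\le\sqrt n$, bound the number of such primes by an upper-bound sieve, where the constant is irrelevant. This gives $\ll n/(\varphi(a)\log n\log y)$.
\item Sum over $a$ with $\omega(a)\le(1-\varepsilon)\log\log n$, inserting the weight $(1-\varepsilon)^{\omega(a)-(1-\varepsilon)\log\log n}\ge 1$.
\end{itemize}
This shows the exceptional primes form a proportion $(\log n)^{-c_\varepsilon+o(1)}$ with $c_\varepsilon=\varepsilon+(1-\varepsilon)\log(1-\varepsilon)>0$. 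That is far smaller than your $O(1/(\varepsilon^2\log\log n))$, and it uses only the kind of sieve input available to Erd\H{o}s.
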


This in fact means that for ``average'' $p$ $\mathbb{Z}_p$ has an action with a simple singularity with corank at least $O(log(log(p)))$. We don't need our divisors $d_1, \ldots , d_n$ to be distinct or prime, however the function $\Omega(n)$ counting prime divisors with multiplicity has the same asymptotic behaviour as the number of distinct prime divisors (\cite{13}, sections 22.10 and 22.11), so it can be reasonably assumed that counting the total number of not necessary distinct divisors $d_1, \ldots, d_n$ of $p-1$ will not lead to a different estimate.

We will now prove that the corank of a stable invariant singularity in fact has logarithmic growth.

\begin{theorem}There are constants $\alpha, \beta$ and an infinite sequence of distinct prime numbers $p_1, \ldots p_n$ such that $p_i - 1$ has at least $\alpha log_2 (p_i - 1) + \beta$ prime divisors, counted with multiplicity.

\end{theorem}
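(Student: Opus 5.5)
We need infinitely many primes $p$ such that $p-1$ has at least $\alpha\log_2(p-1)+\beta$ prime factors counted with multiplicity, i.e. $p-1$ is divisible by a large power of a small number. The natural route is Linnik's theorem on the least prime in an arithmetic progression. It gives an absolute constant $L$ (Xylouris: $L=5$ is admissible) and $c>0$ such that for every modulus $q$ and every $a$ with $\gcd(a,q)=1$ there is a prime $p\equiv a \pmod q$ with $p\le c\,q^{L}$. We will apply it with $q=2^k$ and $a=1$.

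For each $k\ge 1$, let $p_k$ be the least prime with $p_k\equiv 1\pmod{2^k}$. Linnik's theorem gives $p_k\le c\,2^{kL}$, so $\log_2(p_k-1)\le \log_2 p_k\le kL+\log_2 c$. Since $2^k\mid p_k-1$, the number $\Omega(p_k-1)$ of prime divisors of $p_k-1$ counted with multiplicity is at least $k$. Hence
\[
\Omega(p_k-1)\ \ge\ k\ \ge\ \frac{1}{L}\log_2(p_k-1)-\frac{\log_2 c}{L},
\]
which is the required inequality with $\alpha=1/L$ and $\beta=-\log_2 c/L$.

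To get infinitely many distinct primes, note that $p_k\ge 2^k+1$. The sequence $(p_k)$ is therefore unbounded and contains infinitely many distinct values; passing to a subsequence of distinct values gives the $p_i$. The same pair $(\alpha,\beta)$ works for each of them, since the inequality holds for every $k$ separately.

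The only nontrivial input is Linnik's theorem. An elementary alternative would be to use primes dividing $\Phi_{2^k}(x)=x^{2^{k-1}}+1$, which are $\equiv 1 \pmod{2^k}$, but this gives no control on the size of $p$ relative to $2^k$. So citing Linnik, with an explicit exponent, is the main step, and I expect no further obstacle.

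For the application to Theorem 1.4, write $p-1=d_1\cdots d_n$ with the $d_i$ the prime factors, so that $n=\Omega(p-1)$. The construction of Section 5 needs $n$ odd. If $\Omega(p-1)$ is even, merge two of the factors into one; this costs only $1$ in $n$, and that loss is absorbed into $\beta$.
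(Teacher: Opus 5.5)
Your proof is correct and follows the paper's argument: Linnik's theorem applied to the progression $1+k2^m$ gives a prime $p<C2^{mL}$ with $2^m\mid p-1$. Hence $\Omega(p-1)\ge m>(\log_2(p-1)-\log_2 C)/L$. The only differences are cosmetic: you get distinctness from $p_k\ge 2^k+1$ by passing to a subsequence, where the paper picks each new modulus $2^m$ larger than the primes already found; and you add a correct parity-merging remark for the application to Theorem 1.4.
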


\begin{proof}

By Linnik's theorem (\cite{14}) there are absolute constants $C, L$ such that every arithmetical progression $a + kd, k \; \in \; \mathbb{Z}_{\geq 0}$ with coprime $a, d$ and $0<a<d$ contains at least one prime $p < C d^L$. Consider $a=1, d= 2^m$ and pick the first prime in the arithmetic progression $a + kd$. $p-1$ is divisible by $2^m$, so $\Omega(p-1) \geq m$. As $p < C d^L = C 2^{mL}$, $log_2 (p-1) < log_2(C) + mL$. Therefore $\Omega(p-1) >\frac{ log_2 (p-1) - log_2 (C)}{L}$.  By choosing each succesive $2^m$ to be strictly greater than all the previously obtained $p_i$ we can build an infinite sequence of such $p$.

\end{proof}

It should be noted that the constant $L$ in Linnik's theorem was later estimated to be at most $5$ (\cite{15}).

\end{document}